\numberwithin{equation}{section}
\theoremstyle{plain}
\newtheorem{theorem}{Theorem}
\theoremstyle{plain}
\newtheorem*{theorema}{Theorem~A}
\theoremstyle{plain}
\theoremstyle{plain}
\newtheorem{lemma}{Lemma}[section]
\theoremstyle{remark}
\newtheorem*{remark}{Remark}
\theoremstyle{definition}
\newtheorem*{case1}{Case~1}
\theoremstyle{definition}
\newtheorem*{case2}{Case~2}
\theoremstyle{definition}
\newtheorem*{case11}{Case~1: colour split}
\theoremstyle{definition}
\newtheorem*{case22}{Case~2: no colour split}
\def\bfp{\mathbf{p}}
\def\bfv{\mathbf{v}}
\def\bfx{\mathbf{x}}
\def\Nn{\mathbb{N}}
\def\Qq{\mathbb{Q}}
\def\Rr{\mathbb{R}}
\def\Zz{\mathbb{Z}}
\def\EEE{\mathcal{E}}
\def\III{\mathcal{I}}
\def\LLL{\mathcal{L}}
\def\MMM{\mathcal{M}}
\def\PPP{\mathcal{P}}
\def\frakF{\mathfrak{F}}
\def\KKKK{\mathscr{K}}
\renewcommand{\le}{\leqslant}
\renewcommand{\ge}{\geqslant}
\title{A note on density of geodesics}
\author[Beck]{J. Beck}
\address{Department of Mathematics, Hill Center for the Mathematical Sciences, Rutgers University, Piscataway NJ 08854, USA}
\email{jbeck@math.rutgers.edu}
\author[Chen]{W.W.L. Chen}
\address{School of Mathematical and Physical Sciences, Faculty of Science and Engineering, Macquarie University, Sydney NSW 2109, Australia}
\email{william.chen@mq.edu.au}
\author[Yang]{Y. Yang}
\address{School of Science, Beijing University of Posts and Telecommunications, Beijing 100876, China}
\email{yangyx@bupt.edu.cn}
\begin{document}

\keywords{geodesics, density, uniformity}

\subjclass[2010]{11K38, 37E35}

\begin{abstract}
We extend the famous result of Katok and Zemlyakov on the density of half-infinite geodesics on finite flat rational surfaces
to half-infinite geodesics on a finite polycube translation $3$-manifold.
We also extend this original result to establish a weak uniformity statement.
\end{abstract}

\maketitle

\thispagestyle{empty}

%%%%%%%%%%
%
% SECTION 1
%
%%%%%%%%%%

\section{A density result}\label{sec1}

A finite \textit{polysquare region} $P$ is an arbitrary connected, though not necessarily simply-connected,
polygon on the plane which is tiled with closed unit squares, called the \textit{atomic squares} or \textit{faces} of~$P$,
and which satisfies the following conditions:

(i) Any two atomic squares in $P$ either are disjoint, or intersect at a single point, or have a common edge.

(ii) Any two atomic squares in $P$ are joined by a chain of atomic squares where any two neighbours
in the chain have a common edge.

Note that $P$ may have \textit{holes}, and we also allow \textit{whole barriers} which are vertical or
horizontal \textit{walls} that consist of one or more boundary edges of atomic squares, as shown
in the picture on the left in Figure~1.

\begin{displaymath}
\begin{array}{c}
\includegraphics[scale=0.8]{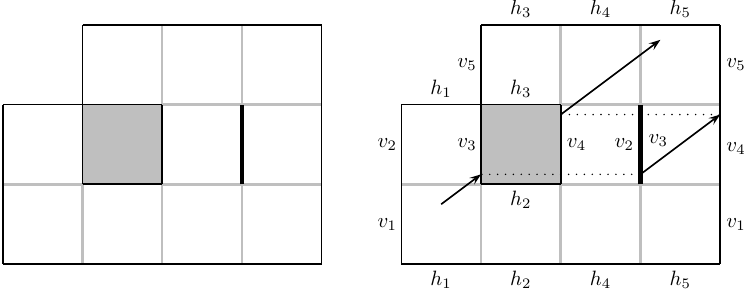}
\\
\mbox{Figure 1: bla bla bla}
\end{array}
\end{displaymath}

Given such a finite polysquare region~$P$, we can convert it into a finite \textit{polysquare translation surface} $\PPP$
by identifying pairs of the boundary edges with inward normals in opposite directions,
as illustrated in the passage from the picture on the left to the picture on the right in Figure~1.
Geodesic flow on such a finite polysquare translation surface $\PPP$ is then $1$-direction geodesic flow.

We say that a geodesic $\LLL(t)$, $t\ge0$, on $\PPP$ is \textit{half-infinite} if it does not hit a singularity of $\PPP$ and becomes undefined.
Furthermore, such a geodesic is \textit{dense} if it gets arbitrarily close to every point of~$\PPP$.

The following result of Katok and Zemlyakov~\cite{KZ75} in 1975
includes density of half-infinite geodesics on finite polysquare translation surfaces as a special case.

\begin{theorema}
With the exception of a countable set of given directions, every half-infinite geodesic on a finite rational surface $\PPP$ is dense.
In the special case when $\PPP$ is a finite polysquare translation surface, any half-infinite geodesic with irrational slope is dense in~$\PPP$.
\end{theorema}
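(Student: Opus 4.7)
The plan is to focus on the second (polysquare) assertion, since the general rational polygon case reduces to an argument of the same flavour on a translation surface via the Zemlyakov--Katok unfolding construction. Fix an irrational slope $\alpha$ and let $\LLL(t)$, $t\ge 0$, be a half-infinite geodesic on $\PPP$ of slope $\alpha$. The strategy is to reduce density of $\LLL$ in $\PPP$ to minimality of a first-return map on a transverse cross-section, in the standard style used in the theory of interval exchange transformations.

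As cross-section take $\Sigma \subset \PPP$, the disjoint union of all horizontal unit edges of the atomic squares (after boundary identifications), regarded as a finite collection of unit intervals. Since $\alpha \ne 0$, every half-infinite geodesic of slope $\alpha$ crosses $\Sigma$ infinitely often unless it hits a singularity, so there is a well-defined first-return map $T : \Sigma \to \Sigma$, defined off a finite set of discontinuities. On each interval of continuity $T$ acts as a translation whose displacement is determined by $\alpha$ and the combinatorics of $\PPP$; hence $T$ is an interval exchange transformation on finitely many intervals. The next step is to verify Keane's no-saddle-connection condition: if a forward iterate of $T$ were to send one discontinuity onto another, the corresponding trajectory would be a segment of slope $\alpha$ joining two singularities of $\PPP$. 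But the singularities of $\PPP$ lie at integer lattice points of the underlying tiling, so any such segment has rational slope, contradicting irrationality of $\alpha$.

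Once Keane's condition is in hand, minimality follows from Keane's theorem: an interval exchange satisfying this non-degeneracy condition is minimal, so every forward orbit of $T$ is dense in $\Sigma$. Since the geodesic flow on $\PPP$ continuously interpolates between consecutive returns to $\Sigma$, density of the return orbit in $\Sigma$ upgrades to density of $\LLL$ in $\PPP$, which is the desired conclusion. The main obstacle in this plan is the invocation of Keane's theorem itself, whose proof uses a careful Rauzy-induction argument; in our polysquare setting the hypothesis is trivial to verify from the integer-lattice structure, so essentially all the work is packaged into this classical black box. For the general rational case one additionally needs the Zemlyakov--Katok unfolding to replace the billiard table by a translation surface on which the same cross-section / interval exchange argument applies, with saddle connections ruled out for all but countably many directions.
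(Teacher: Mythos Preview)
Your argument is correct for the polysquare assertion, but note first that the paper does not itself prove Theorem~A: it is quoted as the 1975 result of Katok and Zemlyakov, and the paper's own contribution is the $3$-dimensional analogue, Theorem~\ref{thm1}. The natural comparison is therefore between your route and the method the paper develops for Theorem~\ref{thm1}, which specialises word-for-word to the $2$-dimensional polysquare case.

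The two approaches are genuinely different. You pass to a first-return interval exchange on the union of horizontal edges and invoke Keane's minimality theorem, with irrationality of~$\alpha$ ruling out saddle connections (since singularities sit at lattice points) and hence verifying the infinite-distinct-orbit condition. The paper instead argues by contradiction with a direct topological--ergodic device: assuming the geodesic misses an open ball~$B$, one forms the flow-invariant open set $\III(B)$, $2$-colours $\PPP$ (respectively~$\MMM$) into $\III(B)$ and its complement, and tracks how a family of congruent balls---one per atomic square, all with the same mod~$1$ image---is cut by singular edges under the flow. Either every cut keeps the balls monochromatic, which forces entire atomic squares to be monochromatic and contradicts face-connectedness of~$\PPP$; or some ball becomes colour-split, and Lemma~\ref{lem12} shows this is irreversible, so the split ball eventually flows inside one of the larger monochromatic balls~$B'_i$, which is absurd.

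What each buys: your IET route is short and conceptually transparent in two dimensions, but it rests on Keane's theorem as a substantial black box (Rauzy induction or an equivalent), and there is no evident interval-exchange analogue for a transversal in a polycube $3$-manifold. The paper's colouring/splitting argument is self-contained, needs only ergodicity of the Kronecker flow on the torus, and is dimension-agnostic---which is exactly why it transports to Theorems~\ref{thm1} and~\ref{thm2} and underpins the rest of the paper.
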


We remark that a finite rational surface is a flat surface with finitely many faces and where every face is a rational polygon,
where each angle is a rational multiple of~$\pi$, 
and parallel edges with the same length are identified in pairs, while identified edges from the same face have
opposite orientation.

In this paper, we consider the $3$-dimensional analogue of the special case of this result.

A finite \textit{polycube region} $M$ is an arbitrary connected, though not necessarily simply-connected,
polyhedron in $3$-space which is tiled with closed unit cubes, called the \textit{atomic cubes} of~$M$,
and which satisfies the following conditions:

(i) Any two atomic cubes in $M$ either are disjoint, or intersect at a single point, or have a common edge, or have a common face.

(ii) Any two atomic cubes in $M$ are joined by a chain of atomic cubes where any two neighbours
in the chain have a common face.

Given such a finite polycube region~$M$, we can convert it into a finite \textit{polycube translation $3$-manifold} $\MMM$
by identifying pairs of the boundary faces with inward normals in opposite directions.
Geodesic flow in such a finite polycube translation $3$-manifold $\MMM$ is then $1$-direction geodesic flow.

A half-infinite geodesic in a finite polycube translation $3$-manifold$\MMM$ is dense if it gets arbitrarily close to every point of~$\MMM$.

To state our first result, we need a simple definition.

A vector $\bfv^*=(\alpha_1,\alpha_2,1)\in\Rr^3$ is a Kronecker direction if $v_1,v_2,1$ are linearly independent over~$\Qq$.

\begin{theorem}\label{thm1}
Let $\MMM$ be a polycube translation $3$-manifold with $s$ atomic cubes.
Then any half-infinite geodesic with a Kronecker direction $\bfv^*=(\alpha_1,\alpha_2,1)\in\Rr^3$ is dense in~$\MMM$.
\end{theorem}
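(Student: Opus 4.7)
The plan is to reduce to a Kronecker-Weyl equidistribution statement by introducing a Poincar\'e section at horizontal faces. Since the $z$-component of $\bfv^* = (\alpha_1, \alpha_2, 1)$ equals $1$, the section $\Sigma \subset \MMM$ consisting of all horizontal faces of atomic cubes (after the identifications of $\MMM$) is transverse to the flow, with return time exactly $1$, and the first-return map $T : \Sigma \to \Sigma$ is a piecewise translation.

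Because every boundary identification of $\MMM$ is by an integer translation vector, reducing coordinates modulo $\Zz^3$ on each atomic cube yields an $s$-fold covering map $p : \MMM \to \Tt^3 = \Rr^3/\Zz^3$. Its restriction $p|_\Sigma : \Sigma \to \Tt^2$ is an $s$-fold cover of the horizontal $2$-torus, and one verifies $p \circ T = R \circ p$ for the rigid rotation $R(x, y) = (x + \alpha_1, y + \alpha_2) \pmod 1$. By Kronecker's theorem, the assumption that $1, \alpha_1, \alpha_2$ are $\Qq$-linearly independent is equivalent to the minimality of $R$ on $\Tt^2$, so the projection of every $T$-orbit is already dense in $\Tt^2$.

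The main obstacle is lifting density from $\Tt^2$ to $\Sigma$: I must show the $T$-orbit visits every sheet of the cover $p|_\Sigma$. Viewing $T$ as a skew product $T(\tilde q, i) = (R(\tilde q), \sigma_{\tilde q}(i))$ over $R$ with a locally constant cocycle $\sigma_{\tilde q} \in \mathrm{Sym}(s)$, one needs to show the group generated by the values of $\sigma$ along a generic $R$-orbit acts transitively on the $s$ sheets. Connectedness of $M$, via the chain condition (ii) in its definition, makes the monodromy representation $\Zz^2 = \pi_1(\Tt^2) \to \mathrm{Sym}(s)$ of the cover transitive on the sheets; combined with the irrationality of $(\alpha_1, \alpha_2)$ inherent in the Kronecker hypothesis, this should force every $T$-orbit to realize every sheet. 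Finally, density of the $T$-orbit on $\Sigma$ implies density of the geodesic $\LLL(t)$ in $\MMM$, since between consecutive returns the geodesic traces a straight segment in direction $\bfv^*$, and these segments fill $\MMM$ as the return points become dense in $\Sigma$.
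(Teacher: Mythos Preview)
Your reduction is correct and is a genuinely different route from the paper's: discretising via the horizontal section $\Sigma$, recognising the first-return map $T$ as a skew product over the minimal rotation $R$ on $\Tt^2$, and reducing density of $\LLL$ to the statement that every $T$-orbit meets all $s$ sheets is a clean reformulation. The paper, by contrast, argues directly in $\MMM$ by contradiction: it two-colours $\MMM$ into the forward flow-saturation $\III(B)$ of a missed ball and its complement, places one small ball in each atomic cube with common $\Tt^3$-projection, flows them forward, and shows via a splitting analysis that neither a colour-split ball (Case~1) nor a configuration of purely monochromatic atomic cubes (Case~2) is compatible with face-connectedness of~$\MMM$.

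There is, however, a genuine gap at precisely the point you mark with ``should force''. The implication
\[
(\text{monodromy transitive on sheets})\ +\ (R\ \text{minimal})\ \Longrightarrow\ (T\ \text{minimal})
\]
is not a general fact about locally constant finite extensions of minimal rotations, and you supply no argument for it; yet this is exactly where the content of Theorem~\ref{thm1} lies. A cocycle whose values generate a transitive subgroup of $\mathrm{Sym}(s)$ can still yield a non-minimal skew product; what matters is a cohomological condition you have not addressed. There is also a smaller slip: $p|_\Sigma$ is a \emph{branched} cover of $\Tt^2$, so the relevant monodromy representation is of $\pi_1(\Tt^2\setminus\{\text{branch points}\})$, a free group, not of $\Zz^2$; the loops around branch points carry the nontrivial permutations and are invisible in $\pi_1(\Tt^2)=\Zz^2$.

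In the paper's argument, it is Lemma~1.2 that does the real work your proposal is missing: the Kronecker hypothesis is used \emph{arithmetically} to show that a $\bfv^*$-flow line meeting a singular edge in, say, the $y$-direction can never meet another such edge, since a second hit would force $\alpha_1\in\Qq$. This irreversibility of splitting is what ultimately excludes a proper flow-invariant colour class. Your framework would need an analogous mechanism --- for instance, proving directly that every closed $T$-invariant subset of $\Sigma$ is a union of full sheets, after which face-connectedness of $\MMM$ finishes the job --- but that step is absent from the proposal.
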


\begin{proof}
Suppose, on the contrary, that $\LLL(t)$, $t\ge0$, is not dense.
Then its closure $\overline{\LLL}$ is a proper subset of~$\MMM$,
and the open set $\MMM\setminus\overline{\LLL}$ contains an open ball~$B$.
We may assume, without loss of generality, that $B$ is contained in a single atomic cube of~$\MMM$.
Then the subset
\begin{displaymath}
\III(B)=\{\bfv\in\MMM:\bfv=\bfv_0+t\bfv^*\mbox{ for some $\bfv_0\in B$ and $t\ge0$}\}\subset\MMM
\end{displaymath}
is a $\bfv^*$-flow-invariant open set.

Consider the multiplicity function $m:[0,1)^3\to\Nn$, given by
\begin{displaymath}
m(\bfx)=\vert\{\bfp\in\III(B):\bfp\equiv\bfx\bmod{1}\}\vert,
\quad
\bfx\in[0,1)^3.
\end{displaymath}
In other words, $m(\bfx)$ counts the number of points in $\III(B)$ that have common image $\bfx\in[0,1)^3$ under modulo~$1$ projection
from $\MMM$ to the unit torus $[0,1)^3$.
Since the $\bfv^*$-flow in the unit torus $[0,1)^3$ is ergodic, the function $m$ is constant almost everywhere,
and the constant is an integer $m_0$ satisfying $0\le m_0\le s$.

As $\III(B)$ is an open set, clearly $m_0\ge1$.
On the other hand, the complement of $\III(B)$ in $\MMM$ is a closed set that contains~$\overline{\LLL}$.
Clearly the image under modulo~$1$ projection of $\overline{\LLL}$ is $[0,1)^3$,
as the image of $\LLL(t)$, $t\ge0$, under modulo~$1$ projection from $\MMM$ to the unit torus $[0,1)^3$
is a geodesic in the unit torus $[0,1)^3$ with a Kronecker direction, and is therefore dense in $[0,1)^3$.
Hence $1\le m_0\le s-1$.

\begin{lemma}\label{lem11}
There exist $m_0$ open balls $B'_1,\ldots,B'_{m_0}$ of identical positive radius such that $B'_1\subset B$ and
\textcolor{white}{xxxxxxxxxxxxxxxxxxxxxxxxxxxxxx}%%%%%%%%%%
\begin{displaymath}
B'_i\subset\III(B),
\quad
i=1,\ldots,m_0,
\end{displaymath}
are in distinct atomic cubes of $\MMM$ and their images under modulo~$1$ projection from $\MMM$ to the unit torus $[0,1)^3$
are identical.
\end{lemma}

\begin{proof}
Let $B_0$ be the image of $B$ under modulo~$1$ projection from $\MMM$ to the unit torus $[0,1)^3$.
Clearly there exists $\bfx_0\in B_0$ such that $m(\bfx_0)=m_0$.
This means that there exist $\bfp_1\in B\subset\III(B)$ and $\bfp_2,\ldots,\bfp_{m_0}\in\III(B)$ in $m_0-1$
other distinct atomic cubes of $\MMM$ such that their common image 
under modulo~$1$ projection from $\MMM$ to the unit torus $[0,1)^3$ is~$\bfx_0$.

Since $\III(B)$ is an open set, it follows that for each $i=1,\ldots,m_0$, there exists a real number $r_i>0$
such that $B(\bfp_i;r_i)\subset\III(B)$, where $B(\bfp_i;r_i)$ denotes the open ball with centre $\bfp_i$ and radius~$r_i$.
Let $r=\min\{r_1,\ldots,r_{m_0}\}>0$.
Then the open balls
\begin{displaymath}
B'_i=B(\bfp_i;r)\subset\III(B),
\quad
i=1,\ldots,m_0,
\end{displaymath}
have common image $B(\bfx_0;r)\subset[0,1)^3$ under modulo~$1$ projection from $\MMM$ to the unit torus $[0,1)^3$.
\end{proof}

Let $B'_{m_0+1},\ldots,B'_s$ denote open balls in the other $s-m_0$ atomic cubes of $\MMM$
such that $B'_1,\ldots,B'_s$ have common image under modulo~$1$ projection from $\MMM$ to the unit torus $[0,1)^3$.
Observe that all these open balls have the same relative position within their own atomic cubes.

It is convenient to $2$-colour the polycube translation $3$-manifold $\MMM$ as follows.
We colour the $\bfv^*$-flow-invariant set $\III(B)$ \textit{white}
and colour its complement \textit{silver}.
Then each of the open balls $B'_1,\ldots,B'_{m_0}$ is almost everywhere white,
while each of the open balls $B'_{m_0+1},\ldots,B'_s$ is almost everywhere silver.

For technical reasons which will become clear later, we consider a collection
of open balls $B_1,\ldots,B_s$, where for every $i=1,\ldots,s$, the open ball $B_i$
has the same centre as the open ball~$B'_i$, but has radius equal to half of the radius of~$B'_i$.
Of course, each of the open balls $B_1,\ldots,B_{m_0}$ is almost everywhere white,
while each of the open balls $B_{m_0+1},\ldots,B_s$ is almost everywhere silver.

We next spread the open balls $B_1,\ldots,B_s$ with the $\bfv^*$-flow.
Suppose that $B_\dagger$ is the common image of $B_1,\ldots,B_s$ 
under modulo~$1$ projection from $\MMM$ to the unit torus $[0,1)^3$.
Then the projected $\bfv^*$-flow spreads $B_\dagger$ over the unit torus $[0,1)^3$ splitting free
such that every point in $[0,1)^3$ is covered.
This can be achieved in a sufficiently large finite time, in view of the Kronecker density theorem.

Consider now what happens to the open balls $B_1,\ldots,B_s$ under the $\bfv^*$-flow.
Since the system is in general non-integrable, there is splitting.
We make use of this splitting, and we first explain this using a very simple example.

Consider the polysquare translation surface with $4$ atomic squares in the picture on the left in Figure~2.
Here the vertical edges on the bottom row of atomic squares are barriers.
We now consider the polycube translation $3$-manifold $\MMM=\PPP\times[0,1)$
which is the cartesian product of the surface $\PPP$ with the unit torus $[0,1)$.
Here the $X$-faces on the bottom row of atomic cubes are barriers,
as shown in the picture on the right in Figure~2.

\begin{displaymath}
\begin{array}{c}
\includegraphics[scale=0.8]{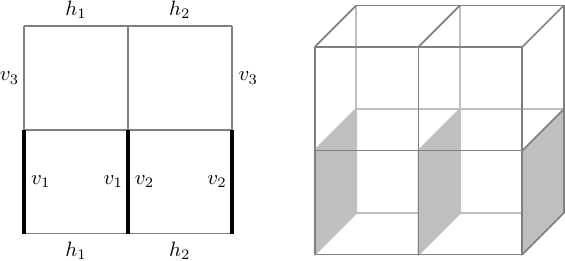}
\\
\mbox{Figure 2: a very simple polycube translation $3$-manifold $\MMM=\PPP\times[0,1)$}
\end{array}
\end{displaymath}

Figure~3, where the integrable direction corresponding to the unit torus $[0,1)$ is suppressed,
shows $4$ open balls $B_1,\ldots,B_4$ in the same relative positions within their own atomic cubes.
The lightgray colour shows part of their passages under the $\bfv^*$-flow
when they encounter singularities caused by the barriers on the bottom $X$-faces.
Let us see what happens to the open ball~$B_1$.
Clearly it is cut into $2$ parts by a plane containing the vector $\bfv^*$
and the top edge of barrier which is the middle bottom $X$-face.
The part labelled $B_1(+)$ continues unhindered and eventually reaches $B^*_1(+)$.
The part labelled $B_1(-)$ hits the middle bottom $X$-face, then bounces to the left bottom $X$-face
and eventually reaches $B^*_1(-)$.
However, $B^*_1(+)$ and $B^*_1(-)$ are now in different atomic cubes.
Instead, $B^*_1(+)$ now forms an open ball with $B^*_2(-)$.
In summary, each of $B_1,\ldots,B_4$ is cut into $2$ parts in an identical way,
and each individual part may after splitting pair up with the complementary part of a different open ball
to form an open ball.

\begin{displaymath}
\begin{array}{c}
\includegraphics[scale=0.8]{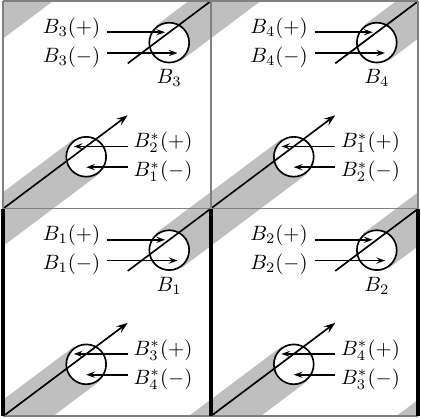}
\\
\mbox{Figure 3: explaining the splitting method on $\MMM=\PPP\times[0,1)$}
\end{array}
\end{displaymath}

After this example, we return to the general problem concerning the open balls $B_1,\ldots,B_s$.
Each of these open balls is monochromatic.
After encountering the splittings, we have $s$ new open balls, some of which may not be monochromatic any more,
as the $2$ individual parts of a new open ball may come from distinctly coloured open balls.

Recall that the projected $\bfv^*$-flow spreads $B_\dagger$ over the unit torus $[0,1)^3$ splitting free
such that every point in $[0,1)^3$ is covered, and this can be achieved in finite time.
Let us move the the open balls $B_1,\ldots,B_s$ by the $\bfv^*$-flow for this finite time,
and call this the finite $\bfv^*$-flow process.
Splitting will occur a finite number of times during this finite process.
There are two possibilities:

\begin{case1}
At some stage of the finite $\bfv^*$-flow process, there is a colour split open ball.
Thus a positive proportion of this ball is white, and a positive proportion of this ball is silver.
\end{case1}

\begin{case2}
After each splitting, all the new open balls remain monochromatic.
Thus there is no colour split balls at any stage of the finite $\bfv^*$-flow process.
\end{case2}

We show that both possibilities lead to a contradiction.

\begin{case11}
We first show that colour splitting is not reversible, in the sense that any further splitting cannot lead to a monochromatic open ball.

\begin{lemma}\label{lem12}
If there is a colour split open ball at any stage of the finite $\bfv^*$-flow process,
then there is a colour split open ball at the end of the finite $\bfv^*$-flow process.
\end{lemma}

\begin{proof}
It suffices to consider the effect of a splitting edge in the $y$-direction,
as the planes that cut $S$ into $2$ parts corresponding to splitting edges
in the $x$-, $y$- and $z$-directions are perpendicular to the directions
given by the vector products
\begin{displaymath}
\bfv^*\times(1,0,0),
\quad
\bfv^*\times(0,1,0),
\quad
\bfv^*\times(0,0,1),
\end{displaymath}
respectively, and so are not parallel.

Suppose that a point of an open ball $S$ hits some splitting edge
of the polycube translation $3$-manifold $\MMM$ in the $y$-direction
at a point $(m_1,y_1,n_1)$, where for the sake of convenience, we take $m_1,n_1\in\Zz$.
Then this point belongs to the plane that cuts $S$ into $2$ parts.
As $S$ continues in the direction $\bfv^*=(\alpha_1,\alpha_2,1)$ of the flow,
this point takes on values of the form
\begin{equation}\label{eq1.1}
(m_1,y_1,n_1)+t(\alpha_1,\alpha_2,1),
\quad
t\ge0.
\end{equation}
To establish the lemma, it suffices to show that the half-infinite geodesic \eqref{eq1.1}
never hits a splitting edge in the $y$-direction again.
Suppose, on the contrary, that it does.
Then there exist $t\in\Rr$, $m_2,n_2\in\Zz$ and $y_2\in\Rr$ such that
\begin{displaymath}
(m_1,y_1,n_1)+t(\alpha_1,\alpha_2,1)=(m_2,y_2,n_2),
\quad\mbox{so that}\quad
\alpha_1=\frac{m_2-m_1}{n_2-n_1},
\end{displaymath}
contradicting the assumption that $\bfv^*=(\alpha_1,\alpha_2,1)$ is a Kronecker direction.
\end{proof}

Let $B'_\dagger$ be the common image of $B'_1,\ldots,B'_s$ under modulo~$1$ projection
from $\MMM$ to the unit torus $[0,1)^3$.
In view of the Kronecker density theorem, the centre of the open ball $B_\dagger$ will
move under the projected $\bfv^*$-flow in the unit torus $[0,1)^3$ to a point close to
the centre of the open ball~$B'_\dagger$.
This means that the open ball $B_\dagger$ will move under the projected $\bfv^*$-flow in the unit torus $[0,1)^3$
to inside the bigger open ball~$B'_\dagger$.

The implication of this is that the colour split open ball will eventually move under the $\bfv^*$-flow in $\MMM$
to within one of the monochromatic open balls $B'_1,\ldots,B'_s$,
and this is clearly absurd.
\end{case11}

\begin{case22}
In this case, the $\bfv^*$-flow always exhibits $m_0$ white open balls and $s-m_0$ silver open balls.
The $\bfv^*$-flow spreads the open balls $B_1,\ldots,B_s$ and eventually covers the whole
polycube translation $3$-manifold~$\MMM$.
Within any atomic cube of~$\MMM$, consider any two overlapping open balls that are the $\bfv^*$-flow images
of two of $B_1,\ldots,B_s$.
Since both are monochromatic, they must have the same colour.
Thus this colour-coincidence spreads within the atomic cube,
and so it follows that every atomic cube of $\MMM$ is monochromatic.
Thus there are $m_0$ white atomic cubes and $s-m_0$ silver atomic cubes.
However, since $\MMM$ is face-connected, there must be $2$ neighbouring atomic cubes
where one is white and the other is silver.
Then the $\bfv^*$-flow transports a set of positive volume from an atomic cube of one colour
to an atomic cube of another colour, contradicting that the colour is $\bfv^*$-flow invariant.
\end{case22}

This completes the proof of the theorem.
\end{proof}

%%%%%%%%%%
%
% SECTION 2
%
%%%%%%%%%%

\section{When faces have gates and barriers}\label{sec2}

The idea of the proof of Theorem~\ref{thm1} can be extended to prove the following more general result.

Let $\MMM$ be modified from a finite polycube translation $3$-manifold as follows.
Each square face is in part a barrier, coloured red (shaded), and in part a gate, coloured green (white).
The latter permits travel between the two cubes that share the common square face.
We then modify boundary identification accordingly to ensure that $\MMM$ is boundary free
and remains connected.

\begin{theorem}\label{thm2}
Let $\MMM$ be any polycube $3$-manifold with $s$ atomic cubes and barriers,
where each square face has a $2$-colouring such that each of the red and green parts
is the union of finitely many polygons and $\MMM$ is connected.
Then for almost every direction $\bfv^*\in\Rr^3$, any half-infinite geodesic with direction $\bfv^*$
is dense in~$\MMM$.
\end{theorem}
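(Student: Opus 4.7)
The plan is to mimic the proof of Theorem~\ref{thm1}, adapting it to the broader class of splittings that arise when the red/green interfaces on faces are arbitrary polygons. Two ingredients need to be adjusted. First, the projected $\bfv^*$-flow on the unit torus $[0,1)^3$ is minimal and ergodic for every $\bfv^*\in\Rr^3$ whose coordinates are linearly independent over~$\Qq$, and this excludes only a measure-zero set of directions. Second, and more delicately, the splitting edges on $\MMM$ are now the edges of the polygonal red/green decompositions of the faces, and may point in essentially any direction within a coordinate plane. The heart of the proof is to re-establish, under this enriched structure, the analogue of Lemma~\ref{lem12}.

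Following the skeleton of the earlier proof, I would suppose for contradiction that some half-infinite geodesic $\LLL(t)$ in direction $\bfv^*$ fails to be dense, fix an open ball $B\subset\MMM\setminus\overline{\LLL}$ inside a single atomic cube, and introduce the forward $\bfv^*$-flow-invariant set $\III(B)$ together with the multiplicity function $m:[0,1)^3\to\Nn$. For every $\bfv^*$ outside the measure-zero non-ergodic set, $m$ equals almost everywhere some integer $m_0$ with $1\le m_0\le s-1$, the upper bound coming from the fact that $\overline{\LLL}\subset\MMM\setminus\III(B)$ projects densely onto $[0,1)^3$. The construction of the balls $B'_1,\ldots,B'_s$ and the smaller $B_1,\ldots,B_s$, the white/silver colouring, and the finite $\bfv^*$-flow process during which the projected union eventually covers $[0,1)^3$, all transfer unchanged, leaving the same dichotomy between Case~1 (a colour-split ball appears) and Case~2 (no colour-split ball appears).

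In Case~2 the original argument applies with only cosmetic changes: each atomic cube must be monochromatic, and since $\MMM$ is connected through positive-area green portions of shared faces, there exist two neighbouring atomic cubes of opposite colour whose common green region transports a set of positive volume from one colour region to the other under the $\bfv^*$-flow, contradicting the invariance of colour.

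The principal obstacle is generalising Lemma~\ref{lem12} to Case~1. Let $\bfd_1,\ldots,\bfd_N$ denote the finitely many direction vectors of the edges of the polygonal red/green decompositions of the faces of~$\MMM$. Every splitting in the $\bfv^*$-flow process occurs along a plane of the form $\mathrm{span}(\bfv^*,\bfd_i)$, and for almost every $\bfv^*$ two such cutting planes are parallel, as affine $2$-planes in $\Rr^3$, if and only if the corresponding $\bfd_i$ are parallel. Consequently only a subsequent splitting along an edge parallel to a previously encountered $\bfd_i$ could threaten to reverse a colour split, and for this to happen a $\bfv^*$-geodesic would have to reach from one splitting edge of direction $\bfd$ to a $\Zz^3$-translate of another splitting edge of the same direction. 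Parametrising both edges along $\bfd$ by reference points $\bfp_E,\bfp_{E'}$, this requirement reduces, for each such pair $E,E'$ and each $\bfl\in\Zz^3$, to the single linear condition
\begin{displaymath}
\det[\bfv^*,\,\bfd,\,\bfp_{E'}-\bfp_E+\bfl]=0,
\end{displaymath}
which defines a codimension-$1$ algebraic subset of $\Rr^3$. Taking the countable union over $\bfl\in\Zz^3$, the finitely many pairs of parallel splitting edges, and the measure-zero set of non-ergodic directions produces a single Lebesgue-null exceptional set. For every $\bfv^*$ outside this set, the analogue of Lemma~\ref{lem12} is valid and any colour-split ball persists to the end of the finite $\bfv^*$-flow process; the required contradiction is then obtained by the same Kronecker-density argument, which forces the surviving colour-split ball to land inside one of the monochromatic $B'_i$.
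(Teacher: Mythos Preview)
Your sketch is correct and tracks the paper's proof closely. The one device the paper adds that you omit is a \emph{synchronisation} step: before running the flow process it replaces the red/green boundary $\Gamma_i$ on each face by the common union $\Gamma=\bigcup_i\Gamma'_i$ of all their torus projections, so that every one of the $s$ balls is cut along identical planes at the same instants and the ``$s$ balls reassemble into $s$ balls'' picture from Theorem~\ref{thm1} survives verbatim. With that normalisation the paper's analogue of Lemma~\ref{lem12} (Lemma~\ref{lem21}) only has to compare a face splitting edge with its own $\Zz^3$-translates, while a separate case analysis (1)--(7) rules out coincidences between splitting planes coming from edges of different type or direction. Your determinant condition $\det[\bfv^*,\bfd,\bfp_{E'}-\bfp_E+\bfl]=0$ packages Lemma~\ref{lem21} together with that case analysis into a single formula, which is arguably tidier; one small caution is that when $\bfp_{E'}-\bfp_E+\bfl$ happens to be parallel to $\bfd$ the determinant vanishes identically, so you should note separately that this degenerate $\bfl$ forces $t=0$ (no genuine return) unless $\bfv^*\parallel\bfd$, which is still a null set.
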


\begin{proof}
We need to modify the proof of Theorem~\ref{thm1} appropriately at a number of points.
These concern those planes that cut the open balls $B$ into $2$ parts.
Apart from arising from the edges of $\MMM$, these now also arise from the edges of the polygons
on the faces of~$\MMM$.
Let us first deal with the latter, and let us restrict ourselves first to those that are on the $X$-faces of~$\MMM$.

The collection of polygons on one $X$-face may differ significantly from that on another $X$-face.
To handle this, we first observe that not all edges of the atomic cubes of $\MMM$ are singularities,
but we still consider \textit{splitting} on them.
If no splitting actually occurs, then each part of an open ball after splitting reunites with its complement.

Let $\Gamma_i$ denote the collection of colour split boundaries arising from the green and red polygons
on the right $X$-face of the $i$-th atomic cube of~$\MMM$,
and let $\Gamma'_i$ denote the image of $\Gamma_i$ under modulo~$1$ projection from $\MMM$ to
the unit torus $[0,1)^3$.
Then $\Gamma'_i$ lies on the right $X$-face of the unit torus $[0,1)^3$.
Let
\begin{displaymath}
\Gamma=\bigcup_{i=1}^s\Gamma'_i,
\end{displaymath}
and now replace $\Gamma_i$ on the right $X$-face of the $i$-th atomic cube by~$\Gamma^*_i$,
where the image of $\Gamma^*_i$ under modulo~$1$ projection from $\MMM$ to
the unit torus $[0,1)^3$ is~$\Gamma$.
Thus all the right $X$-faces of the atomic cubes of $\MMM$ have essentially the same collection
of \textit{splitting edges} arising from the green and red polygons.
We refer to these as the \textit{$X$-face splitting edges}.

Similarly there are \textit{$Y$-face splitting edges} and \textit{$Z$-face splitting edges}.

We have already commented at the beginning of the proof of Lemma~\ref{lem12}
that the planes that cut $S$ into $2$ parts corresponding to splitting edges
in the $x$-, $y$- and $z$-directions are not parallel.
Consider now an $X$-face splitting edge~$E$.
The plane corresponding $E$ that cuts $S$ into $2$ parts contains both $\bfv^*$ and~$E$.
We need to be careful in case this plane coincides with one of the following:

(1) the plane corresponding to an edge of an atomic cube in direction~$z$;

(2) the plane corresponding to an edge of an atomic cube in direction~$y$;

(3) the plane corresponding to an edge of an atomic cube in direction~$x$;

(4) the plane corresponding to a $Y$-face splitting edge $E_y$;

(5) the plane corresponding to a $Z$-face splitting edge $E_z$;

(6) the plane corresponding to a parallel $X$-face splitting edge $E_x$; or

(7) the plane corresponding to a non-parallel $X$-face splitting edge.

In case (1), the common plane must contain $\bfv^*$, $E$ and the edge of the atomic cube in direction~$z$.
This can only happen if $E$ is in the direction $z$ and the $\bfv^*$-flow takes $E$ to this edge.
It follows that the vector $\bfv^*$ can take only one of countably many different directions in~$\Rr^3$.
A similar conclusion can be drawn in case (2).

In case (3), the common plane must contain $\bfv^*$, $E$ and the edge of the atomic cube in direction~$x$.
Note that $E$ is not parallel to the edge of the atomic cube in direction~$x$.
This case can therefore only happen if the vector $\bfv^*$ lies on the plane
containing $E$ and the edge of the atomic cube in direction~$x$.
This gives rise to a set of directions in $\Rr^3$ of measure~$0$.

In case (4), the common plane must contain $\bfv^*$, $E$ and~$E_y$.
Note that $E$ is not parallel to~$E_y$.
This case can therefore only happen if the vector $\bfv^*$ lies on the plane
containing $E$ and~$E_y$.
This gives rise to a set of directions in $\Rr^3$ of measure~$0$.
A similar conclusion can be drawn in case (5).

In case (6), the common plane must contain $\bfv^*$, $E$ and~$E_x$.
Since $E$ and $E_x$ are parallel, this can only happen if the $\bfv^*$-flow takes $E$ to~$E_x$.
It follows that the vector $\bfv^*$ can take only one of countably many different directions in~$\Rr^3$.

In case (7), the plane containing the two $X$-face splitting edges has constant $x$-coordinate,
and so can only contain vectors of the form $(0,\alpha_1,\alpha_2)\in\Rr^3$.
These give rise to a set of directions in $\Rr^3$ of measure~$0$.

Note that there are only finitely many $X$-, $Y$- or $Z$-face splitting edges.
Hence the above give rise to a set of exceptional directions in $\Rr^3$ of measure~$0$.
In order to complete the proof of Theorem~\ref{thm2},
it suffices to establish the following analogue of Lemma~\ref{lem12}.

\begin{lemma}\label{lem21}
Let $E$ be a $Z$-face splitting edge, and
let $\EEE(E)$ denote the set of exceptional directions $\bfv^*=(\alpha_1,\alpha_2,1)\in\Rr^3$ such that
the $\bfv^*$-flow takes some point of $E$ back to $E$ or to some $Z$-face splitting edge analogous to~$E$.
Then the set $\EEE(E)$ has measure~$0$.
\end{lemma}

\begin{proof}
The edge $E$ can be described by a linear equation $c_1x+c_2y=c_3$,
where the coefficients $c_1,c_2,c_3\in\Rr$ are fixed.
Consider a point $(x_1,y_1,m_1)$ on~$E$,
where $x_1,y_1\in\Rr$ and for the sake of convenience, we take $m_1\in\Zz$.
Then
\begin{displaymath}
c_1x_1+c_2y_1=c_3.
\end{displaymath}
The effect of the $\bfv^*$-flow on this point is described by the half-infinite geodesic
\begin{displaymath}
(x_1,y_1,m_1)+t(\alpha_1,\alpha_2,1),
\quad
t\ge0.
\end{displaymath}
Suppose that this half-infinite geodesic takes the point $(x_1,y_1,m_1)$
back to $E$ or to some $Z$-face splitting edge analogous to~$E$.
Then there exist $t\in\Rr$, $m_2,n_2,q_2\in\Zz$ and $x_2,y_2\in\Rr$ such that
\begin{displaymath}
(x_1,y_1,m_1)+t(\alpha_1,\alpha_2,1)=(x_2,y_2,m_2)
\quad\mbox{and}\quad
c_1(x_2-n_2)+c_2(y_2-q_2)=c_3,
\end{displaymath}
from which we deduce that
\begin{equation}\label{eq2.1}
c_1(m_2-m_1)\alpha_1+c_2(m_2-m_1)\alpha_2=c_1n_2+c_2q_2,
\end{equation}
where $c_1$ and $c_2$ are fixed and $m_1$ is determined by~$E$.
For any fixed choice of the integers $(m_2,n_2,q_2)$, the equation \eqref{eq2.1}
shows that the exceptional points $(\alpha_1,\alpha_2)$ must lie on a line.
Since there are only countably many choices for the integers $(m_2,n_2,q_2)$,
it follows that the set $\EEE(E)$ can be described as a countable union of lines,
and so has measure~$0$.
\end{proof}

This completes the proof of Theorem~\ref{thm2}.
\end{proof}

\begin{remark}
We can compare Lemmas \ref{lem12} and~\ref{lem21}.
In the former, we see from the proof that the exceptional set of directions are the non-Kronecker directions,
whereas in the latter, this is not always the case.

Suppose that in the statement of Theorem~\ref{thm2}, we add an extra condition
and require the edges of the finitely many polygons to have \textit{rational} slopes.
Then in the proof of Lemma~\ref{lem21}, the coefficients $c_1$ and $c_2$
of the line $c_1x+c_2y=c_3$ can be taken to be rational.
Then the equation \eqref{eq2.1} becomes an equation in the variables $\alpha_1$ and $\alpha_2$
with rational coefficients, so that any solution leads to a non-Kronecker direction.

This observation suggests that face splitting edges with rational slopes behave in a similar fashion
to edges of the atomic cubes, whereas face splitting edges with irrational slopes are somewhat
more delicate.
\end{remark}

%%%%%%%%%%
%
% SECTION 3
%
%%%%%%%%%%

\section{Weak uniformity with bounded returns}\label{sec3}

For Theorem~A in its full generality, we can go beyond the class of finite flat rational surfaces,
and consider the larger class of finite \textit{translation surfaces}, where the restriction on the angles is dropped.
A finite translation surface $\PPP$ is a flat surface with finitely many faces and where every face is a polygon,
and parallel edges with the same length are identified in pairs, while identified edges from the same face have
opposite orientation.

In this generality, the directions in $\PPP$ such that no geodesic on $\PPP$ can contain more than one vertex of $\PPP$
defines the \textit{good} directions
and plays the perfect analogue to irrational slopes in the special case when $\PPP$ is a polysquare translation surface.
It remains to show that the set of \textit{bad} directions is countable.
A geodesic on the unit torus $[0,1)^2$ can be extended to a straight line on the plane.
Here every geodesic on $\PPP$ containing at least two vertices of $\PPP$ can be extended to a straight line
on a countable covering of the plane.
Each plane on this countable covering contains a countable number of extended vertices.
Since two points determine a straight line, the set of bad directions is countable.

For a finite translation surface~$\PPP$, we can identify the bad directions in terms of \textit{saddle connections}.
A finite geodesic segment on a finite translation surface $\PPP$ is called a saddle connection if both endpoints
of the geodesic segment are vertices of $\PPP$ and there are no vertices of $\PPP$ in between.
The set of saddle connections of a finite translation surface is countable.
Furthermore, the collection of saddle connections with a given fixed slope is finite.
Then the exceptional directions in Theorem~A are precisely those directions of the saddle connections of~$\PPP$.

Let $\PPP$ be a finite translation surface.
Density of a half-infinite geodesic on~$\PPP$, as given by the full generality of Theorem~A, means that
the orbit of the geodesic visits every non-empty open set in~$\PPP$.
Since the orbit is infinite, this trivially means that it returns to this open set infinitely many times.

A less trivial consequence of density of a half-infinite geodesic is a new concept
which we call \textit{weak uniformity with bounded returns}.
This means that for every non-empty open set $G\subset\PPP$,
there is a constant $\frakF=\frakF(G;\PPP;\alpha)>0$, where $\alpha$ is the slope of the geodesic,
and a finite threshold $c_5=c_5(G;\PPP;\alpha)>0$ such that
for every geodesic segment $L$ with slope $\alpha$ and length $\vert L\vert\ge c_5$,
the inequality
\begin{displaymath}
\lambda_1(G\cap L)\ge\frakF\vert L\vert
\end{displaymath}
holds, where $\lambda_1$ denotes $1$-dimensional Lebesgue measure,
so that $L$ visits $G$ in total length at least $\frakF\vert L\vert$.
Intuitively, $\frakF>0$ gives a lower bound to what may be called the \textit{visiting frequency} of $G$
relative to geodesic segments of slope~$\alpha$.

Every open set in a finite rational translation surface $\PPP$ is Lebesgue measurable.
However, some open sets $G\subset\PPP$ are not Jordan measurable,
and the characteristic function $\chi_G$ has no well defined $2$-dimensional Riemann integral.
The traditional Weyl type uniformity requires that the asymptotic visiting frequency of $G$
is precisely equal to the relative volume of $G$, assuming that $G$ is Jordan measurable.
This motivates us to describe this new concept as a weak form of uniformity.

We establish the following result.

\begin{theorem}\label{thm3}
Suppose that $\alpha$ is not the slope of any saddle connection of a finite translation surface~$\PPP$.
Then every non-empty open set $G\subset\PPP$ has positive visiting frequency relative to geodesic segments of slope~$\alpha$.
\end{theorem}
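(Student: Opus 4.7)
The plan is to convert the pointwise density statement of Theorem~A into a uniform-in-starting-point statement via a compactness argument, and then turn a uniform entrance time into the desired lower bound on $\lambda_1(G\cap L)$. Fix a non-singular point $p\in G$ and a radius $r>0$ with $B(p,r)\subset G$ avoiding all vertices of~$\PPP$, and set $G'=B(p,r/2)$. Let $\delta>0$ be the time required for the unit-speed flow of direction $\bfv$ to traverse a distance $r/2$. Then any geodesic segment of length at most $\delta$ starting in $G'$ is contained in $B(p,r)\subset G$; call this the \emph{dwell time} property.

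The main step is to establish a bounded entrance time: there exists $T_0>0$ such that every geodesic segment of slope $\alpha$ and length $T_0$ in $\PPP$ meets $G'$. Argue by contradiction: otherwise there exist starting points $x_n\in\PPP$ and lengths $T_n\to\infty$ such that the segment of length $T_n$ issuing from $x_n$ in direction $\bfv$ avoids $G'$. Compactness of $\PPP$ yields a convergent subsequence $x_n\to x$. By Theorem~A, either $x$ itself or a generic nearby point has dense forward orbit and hence enters $G'$ at some finite time $T$. Continuous dependence of the flow on initial conditions, valid away from the finitely many singular half-lines, then forces the orbits from $x_n$ to meet $G'$ at times close to $T$, contradicting $T_n\to\infty$.

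With bounded entrance time established, the uniformity is a short calculation. Set $c_5=2(T_0+\delta)$. Given any geodesic segment $L$ of slope $\alpha$ and length $\vert L\vert\ge c_5$, cut $L$ into $n=\lfloor\vert L\vert/(T_0+\delta)\rfloor$ consecutive sub-segments of length $T_0+\delta$. Each sub-segment enters $G'$ within its first $T_0$ units of flow time; the dwell property then keeps it inside $G$ for the following $\delta$ units. These $n$ dwell intervals are pairwise disjoint subsets of $G\cap L$, giving
\begin{displaymath}
\lambda_1(G\cap L)\ge n\delta\ge\frac{\delta}{2(T_0+\delta)}\vert L\vert,
\end{displaymath}
so one takes $\frakF=\delta/(2(T_0+\delta))$.

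The main obstacle is the bounded entrance time step: promoting the existential density of Theorem~A to a uniform statement valid for \emph{every} starting point requires handling segments that begin on or near a backward singular half-line emanating from one of the finitely many vertices of~$\PPP$. This is where the hypothesis on $\alpha$ is used twice: because no saddle connection has slope $\alpha$, each forward orbit from a singularity is itself a dense half-infinite geodesic in the sense of Theorem~A, which together with the finiteness of the singularity set permits the required finite cover of $\PPP$ by open neighbourhoods on each of which a uniform entrance time to $G'$ holds.
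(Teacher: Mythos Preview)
Your proposal is correct and follows essentially the same route as the paper: a compactness argument upgrades the density statement of Theorem~A to a uniform bounded entrance time into a half-radius sub-disk of~$G$, and then $L$ is chopped into pieces each contributing a fixed length to $G\cap L$. The paper's treatment of the pathological limit point $x$ is more explicit---it enumerates the finitely many one-sided continuations $\LLL^{(\kappa)}$ of the geodesic past the singular vertex, picks a non-pathological point $Q^{(\kappa)}$ on each, and uses the no-saddle-connection hypothesis to ensure these continuations are themselves half-infinite---but your final paragraph correctly identifies exactly this obstacle and the same resolution. One cosmetic difference: the paper also bounds entrance times in the reverse direction $(-1,-\alpha)$ and uses the observation that a non-singular point cannot be pathological in both directions (else a saddle connection of slope $\alpha$ would exist); your formulation, which works directly with segments rather than with $T(G;Q)$ for non-pathological~$Q$, avoids needing this.
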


Before we start the proof, let us draw some simple and useful consequences from Theorem~A.

Let $\alpha$ be fixed and not equal to the slope of any saddle connection of~$\PPP$.
Consider the direction $(1,\alpha)$.
A point $Q\in\PPP$ is said to be a pathological starting point in this direction if a geodesic in this direction
and starting from $Q$ hits a singular vertex of $\PPP$ and becomes undefined.
Otherwise $Q\in\PPP$ is said to be a non-pathological starting point in this direction.

Let $Q\in\PPP$ be a non-pathological starting point of a half-infinite geodesic $\LLL(t)$, $t\ge0$,
in the direction $(1,\alpha)$, so that $\LLL(0)=Q$.
Then Theorem~A implies that for every non-empty open set $G\subset\PPP$,
there is a finite threshold $T=T(G;Q)$ such that
\begin{displaymath}
\{\LLL(t):0\le t\le T(G;Q)\}\cap G\ne\emptyset,
\end{displaymath}
so that a geodesic segment of length $T=T(G;Q)$ in the direction $(1,\alpha)$ and starting from the point $Q$
visits the set~$G$.

\begin{proof}[Proof of Theorem~\ref{thm3}]
Suppose that $\alpha$ is fixed and not equal to the slope of any saddle connection of~$\PPP$.
Let $G_1\subset\PPP$ be an open disk of radius $r>0$, and consider the set
\begin{equation}\label{eq3.1}
\{T(G_1;Q):Q\in\PPP\mbox{ not pathological in direction }(1,\alpha)\}.
\end{equation}
We first show that this set is bounded.

Suppose, on the contrary, that the set \eqref{eq3.1} is not bounded.
Then there exists a sequence of points $Q_i\in\PPP$, $i=1,2,3,\ldots,$ such that $T(G_1;Q_i)\to\infty$ as $i\to\infty$.
Since $\PPP$ is compact, there exists a subsequence $Q_{i_j}$, $j=1,2,3,\ldots,$ such that 
$Q_{i_j}\to Q_\infty$ as $j\to\infty$ for some point $Q_\infty\in\PPP$ and
\begin{equation}\label{eq3.2}
T(G_1;Q_{i_j})\to\infty
\quad
\mbox{as $j\to\infty$}.
\end{equation}
The limit point $Q_\infty\in\PPP$ may be non-pathological or pathological.

Suppose that $Q_\infty\in\PPP$ is pathological.
This means that either $Q_\infty$ is a singular vertex of $\PPP$ and so there is no unique geodesic in the direction $(1,\alpha)$
that starts from~$Q_\infty$, or the geodesic in the direction $(1,\alpha)$ and starting from $Q_\infty$ hits a singular vertex of $\PPP$
and becomes undefined thereafter.
However, there are only finitely many ways to extend the geodesic in the direction $(1,\alpha)$ and starting from $Q_\infty$ beyond the singular vertex.

Let $G_2\subset\PPP$ be an open disk of radius $r/2$ and with the same centre as~$G_1$.

Suppose that $\LLL^{(\kappa)}$ is one such extension of the geodesic from~$Q_\infty$.
Let $Q^{(\kappa)}\in\PPP$ be a point beyond the singular vertex on~$\LLL^{(\kappa)}$.
Since $\alpha$ is not the slope of any saddle connection of~$\PPP$, it follows that $Q^{(\kappa)}$ is non-pathological.
Let $d=d(Q_\infty,Q^{(\kappa)})$ be the length of the extended geodesic segment of $\LLL^{(\kappa)}$ from $Q_\infty$ to~$Q^{(\kappa)}$.
Then an initial geodesic segment of $\LLL^{(\kappa)}$ of length $d+T(G_2;Q^{(\kappa)})$ visits~$G_2$.
Furthermore, apart from very short segments near the singular vertex it intersects, this finite geodesic segment of $\LLL^{(\kappa)}$
from $Q_\infty$ to $G_2$ has a positive distance $\delta^{(\kappa)}$ from other singular vertices of~$\PPP$.
Let $\delta_\kappa=\min\{\delta^{(\kappa)},r/2\}$.

Consider next a half-infinite geodesic $\LLL_j$ in the direction $(1,\alpha)$ and starting from a point $Q_{i_j}$ in the
$\delta_\kappa$-neighbourhood of~$Q_\infty$.
If $\LLL_j$ does not split from $\LLL^{(\kappa)}$ up to~$Q^{(\kappa)}$, then it does not split from $\LLL^{(\kappa)}$ before it reaches~$G_1$.
More precisely, if an extended geodesic segment of length $d+T(G_2;Q^{(\kappa)})$ starting from $Q_\infty$ reaches a point $Q_\infty^*\in G_2$,
then a geodesic segment of length $d+T(G_2;Q^{(\kappa)})$ starting from $Q_{i_j}$ reaches a point in the $\delta_\kappa$-neighbourhood of $Q_\infty^*$.
It is clear that any point in this $\delta_\kappa$-neighbourhood of $Q_\infty^*$ is a distance less that $\delta_\kappa+r/2\le r$ from the centre of~$G_2$,
which is also the centre of~$G_1$, and so is contained in~$G_1$.
It follows that
\begin{equation}\label{eq3.3}
T(G_1;Q_{i_j})\le d+T(G_2;Q^{(\kappa)}).
\end{equation}

There are only finitely many possible extensions $\LLL^{(\kappa)}$, $\kappa\in\KKKK$, of the geodesic from~$Q_\infty$.
Let
\textcolor{white}{xxxxxxxxxxxxxxxxxxxxxxxxxxxxxx}%%%%%%%%%%
\begin{displaymath}
\delta^*=\min_{\kappa\in\KKKK}\delta_\kappa>0.
\end{displaymath}
Then there are clearly infinitely many values of $j=1,2,3,\ldots$ such that $Q_{i_j}$ is in the $\delta^*$-neighbourhood of~$Q_\infty$.
It follows that there exists $\kappa\in\KKKK$ such that the inequality \eqref{eq3.3} holds for infinitely many values of~$j$.
This clearly contradicts \eqref{eq3.2}.

For the case when $Q_\infty$ is not pathological, there is a half-infinite geodesic in the direction $(1,\alpha)$ that starts from~$Q_\infty$.
This essentially means that there is only one value of~$\kappa$, and we simply take $Q^{(\kappa)}=Q_\infty$ and $d=0$.

This completes the deduction that the set \eqref{eq3.1} is bounded.
It is also clear that a similar argument shows that both sets
\begin{displaymath}
\{T(G_2;Q):Q\in\PPP\mbox{ not pathological in direction }(1,\alpha)\}
\end{displaymath}
and
\textcolor{white}{xxxxxxxxxxxxxxxxxxxxxxxxxxxxxx}%%%%%%%%%%
\begin{displaymath}
\{T(G_2;Q):Q\in\PPP\mbox{ not pathological in direction }(-1,-\alpha)\}
\end{displaymath}
are bounded.
Let $T^*(G_2)$ be a finite upper bound of the two sets.

A point $Q\in\PPP$ cannot be pathological in both directions $(1,\alpha)$ and $(-1,-\alpha)$, unless $Q$ is a singular vertex of~$\PPP$.
It then follows that every geodesic segment of slope $\alpha$ and length $T^*(G_2)$ that does not contain a singular vertex of $\PPP$
visits~$G_2$, and so visits $G_1$ in total length at least~$r/2$.

Next, let $G\subset\PPP$ be a non-empty open set.
Then there exists an open disk $G_1\subset G$ with some radius $r>0$.
Let $L$ be a geodesic segment on $\PPP$ with length
\begin{equation}\label{eq3.4}
\vert L\vert\ge2T^*(G_2).
\end{equation}
If $L$ contains a singular vertex of~$\PPP$, then this can only be one of the two endpoints.
It follows that $L$ contains at least
\begin{displaymath}
\left[\frac{\vert L\vert}{T^*(G_2)}\right]-1=k\ge1
\end{displaymath}
distinct geodesic segments $L_1,\ldots,L_k$, each of length $T^*(G_2)$ and not containing a singular vertex of~$\PPP$.
Each of $L_1,\ldots,L_k$ visits $G_1$ in total length at least~$r/2$, so $L$ visits $G$ in total length at least
\begin{equation}\label{eq3.5}
\frac{rk}{2}
=\frac{r}{2}\left(\left[\frac{\vert L\vert}{T^*(G_2)}\right]-1\right)
\ge\frac{r}{4}\left[\frac{\vert L\vert}{T^*(G_2)}\right]
\ge\frakF\vert L\vert,
\end{equation}
where
\textcolor{white}{xxxxxxxxxxxxxxxxxxxxxxxxxxxxxx}%%%%%%%%%%
\begin{equation}\label{eq3.6}
\frakF=\frakF(G;\PPP;\alpha)=\frac{r}{8T^*(G_2)}.
\end{equation}
The theorem follows on combining \eqref{eq3.4}--\eqref{eq3.6}.
\end{proof}

Using similar ideas, we can establish the following result.

\begin{theorem}\label{thm4}
Let $\MMM$ be a finite polycube translation $3$-manifold, and let $\bfv$ be a Kronecker direction.
Then every non-empty open set $G\subset\MMM$ has positive visiting frequency.
\end{theorem}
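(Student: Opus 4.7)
The plan is to adapt the proof of Theorem~\ref{thm3} to three dimensions, using Theorem~\ref{thm1} in place of Theorem~A as the underlying density statement. Call a point $Q\in\MMM$ \emph{pathological} in direction $\bfv$ if the half-infinite geodesic from $Q$ in this direction hits a singular edge of $\MMM$ and becomes undefined, and \emph{non-pathological} otherwise. By Theorem~\ref{thm1}, every non-pathological $Q\in\MMM$ and every non-empty open set $G\subset\MMM$ give rise to a finite threshold $T(G;Q)$ such that the geodesic segment of length $T(G;Q)$ from $Q$ visits $G$.

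Fix an open ball $G_1\subset G$ of radius $r>0$ contained in a single atomic cube, and a concentric ball $G_2\subset G_1$ of radius $r/2$. The central step is to establish that the set $\{T(G_1;Q):Q\text{ non-pathological in direction }\bfv\}$ is bounded. Suppose, for contradiction, that there is a sequence $Q_i$ with $T(G_1;Q_i)\to\infty$, and by compactness of $\MMM$ extract a subsequence $Q_{i_j}\to Q_\infty$. If $Q_\infty$ is pathological, the geodesic from $Q_\infty$ admits only finitely many natural extensions $\LLL^{(\kappa)}$, $\kappa\in\KKKK$, beyond its first singular edge, since a singular edge has only finitely many sectors in its normal cross-section. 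For each $\kappa$, choose a non-pathological point $Q^{(\kappa)}$ on $\LLL^{(\kappa)}$ at distance $d$ past the singular edge, and observe that $T(G_2;Q^{(\kappa)})$ is finite. For $Q_{i_j}$ in a sufficiently small $\delta^*$-neighbourhood of $Q_\infty$, its geodesic will not split from some $\LLL^{(\kappa)}$ before reaching $G_1$, and we conclude $T(G_1;Q_{i_j})\le d+T(G_2;Q^{(\kappa)})$ for infinitely many $j$, a contradiction. When $Q_\infty$ is non-pathological, $\KKKK$ is a singleton and the same estimate with $d=0$ applies.

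Running the same argument in the direction $-\bfv$, which is again a Kronecker direction, and observing that any point pathological in both directions must lie on a singular edge, yields a finite uniform upper bound $T^*(G_2)$ such that every geodesic segment of length $T^*(G_2)$ avoiding singular edges visits $G_2$. A straight chord of $G_1$ that meets $G_2$ has length at least $r\sqrt{3}\ge r/2$ inside $G_1$. Hence for a geodesic segment $L$ with $\vert L\vert\ge 2T^*(G_2)$, partitioning $L$ into $k=\lfloor \vert L\vert/T^*(G_2)\rfloor-1$ consecutive sub-segments of length $T^*(G_2)$, each avoiding singular edges, produces $k\ge1$ sub-segments each contributing at least $r/2$ to $\lambda_1(G\cap L)$, delivering the visiting-frequency bound $\frakF=\frakF(G;\MMM;\bfv)=r/(8T^*(G_2))>0$ exactly as in \eqref{eq3.5}--\eqref{eq3.6}.

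The main obstacle is the rigorous handling of the pathological limit point $Q_\infty$: in two dimensions the singularities are isolated vertices, which makes the finitely-many-extensions claim essentially immediate, but in three dimensions the singular set is a union of edges and one has to argue carefully that the number of sectors around each singular edge is finite, that nearby starting points follow the same combinatorial trajectory up to the first singular encounter, and that the $\delta^*$-neighbourhood can be chosen small enough to guarantee non-splitting until $G_1$ is reached. Once these geometric details are in place, the remainder of the proof is a faithful translation of the argument for Theorem~\ref{thm3}.
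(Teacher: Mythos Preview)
Your plan is exactly what the paper intends: it offers no separate proof of Theorem~\ref{thm4} beyond the sentence ``Using similar ideas, we can establish the following result,'' so adapting the proof of Theorem~\ref{thm3} with Theorem~\ref{thm1} replacing Theorem~A is the right approach. The overall structure you give is sound, but two of your intermediate claims, which are automatic in dimension~$2$, fail as stated in dimension~$3$ and need a further ingredient.

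First, you assert that on each extension $\LLL^{(\kappa)}$ past the first singular edge one may ``choose a non-pathological point $Q^{(\kappa)}$.'' In the surface case this is immediate because $\alpha$ is not a saddle-connection slope, so a geodesic meets at most one singular vertex. In three dimensions, however, $\LLL^{(\kappa)}$ may itself run into a second singular edge, and then \emph{every} point on $\LLL^{(\kappa)}$ prior to that encounter is pathological. Second, your claim that ``any point pathological in both directions must lie on a singular edge'' is likewise false in~$\MMM$: a Kronecker geodesic through an ordinary interior point can perfectly well meet a singular edge in the forward direction and a different one in the backward direction.

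Both gaps are repaired by a single observation already implicit in the proof of Lemma~\ref{lem12}: a geodesic in the Kronecker direction $\bfv^*=(\alpha_1,\alpha_2,1)$ can meet at most one singular edge parallel to each coordinate axis, hence at most three singular edges altogether. (For $y$-edges this is exactly Lemma~\ref{lem12}; for $x$-edges one gets $\alpha_2\in\Qq$, and for $z$-edges one gets a rational relation between $\alpha_1$ and~$\alpha_2$.) Consequently the tree of extensions past singular encounters has depth at most~$3$ and finite branching at each node, so finitely many leaves, and every leaf is a genuinely half-infinite geodesic to which Theorem~\ref{thm1} applies. Iterating your extension step at most three times therefore yields a legitimate non-pathological $Q^{(\kappa)}$ on every leaf, and the compactness argument then produces a uniform $T^*(G_2)$ that bounds the hitting time along \emph{every} finite geodesic segment of that length, so the two-directions trick is no longer needed. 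With this amendment in place your final counting argument \eqref{eq3.5}--\eqref{eq3.6} goes through unchanged (allowing $-2$ rather than $-1$ in the count of subsegments if both endpoints of $L$ happen to lie on singular edges).
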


%%%%%%%%%%
%
% REFERENCES
%
%%%%%%%%%%

\end{document}